\newtheorem{prop}{Proposition}[section]
\newtheorem{thh}[prop]{Theorem}
\theoremstyle{definition}
\newcommand{\imagesfolder}{Figures/}
\newcommand{\textfolder}{Text/}
\title{On the functoriality of Khovanov homology}
\date{}
\author{Dorra Hamza 
\\
Supervisor: Emmanuel Wagner}
\begin{document}
\maketitle
\begin{abstract}
        We provide an overview of Blanchet’s oriented model of Khovanov homology, which resolves the sign ambiguity in functoriality. We give a detailed and self-contained exposition of the construction. We establish strict functoriality for the oriented model and briefly outline its extension to cobordisms with singularities recently studied by Scott Carter, Benjamin Cooper, Mikhail Khovanov and Vyacheslav Krushkal in \cite{ImmersedCobordisms}.
\end{abstract}
\tableofcontents
\section{Introduction}
In \cite{Khovanov2000} Khovanov defined a homology theory categorifying the Jones polynomial. It was later proven \cite{Fun} that Khovanov homology of links is functorial up to sign ambiguity: cobordisms between links induce maps between their respective Khovanov complexes defined up to homotopy and overall sign. Since then there were several attempts to fix the sign ambiguity (\cite{Caprau2008}, \cite{FixSign}). One method, widely considered to be the most natural, was introduced by Blanchet \cite{BLANCHET2010}. He defined an oriented model of Khovanov homology replacing circles and surfaces in the original definition with \textit{trivalent graphs} and \textit{foams}. complete proof for $gl_n$ homology using similar tools was later given by Michael Ehrig, Daniel Tubbenhauer, Paul Wedrich in \cite{gln}. 
\par
Functoriality of Khovanov homology has important applications in low dimensional topology. Namely
it was used in the definition of Rasmussen’s s-invariant \cite{Rasmussen2010}, which gives a bound on the slice genus of knots and was used to give a combinatorial proof of the Milnor conjecture on the slice genus of torus knots. Functoriality has also been used in the development of the Skein–Lasagna invariant, which has significant applications in 4-dimensional topology (\cite{Lasagna2024}, \cite{Morrison2022}).\par
Recently, the construction of Khovanov homology of cobordisms was generalised to cobordisms with double point singularities \cite{ImmersedCobordisms}. The maps associated to such cobordisms are proven to be defined up to sign. \par 
However, there are not many accessible references that detail Blanchet’s construction and establish strict functoriality. The aim of this paper is to provide such a reference, accessible to readers from different backgrounds, including those who may not be familiar with the original sources. At the end of the paper, we also discuss how to extend the construction to cobordisms with singularities, and we also give a direct proof of strict functoriality in this setting as well. A different proof using functoriality of Khovanov homology in $\mathbb{S}^3$ is given in \cite{S3}.\par
The structure of the paper is as follows. In Section \ref{TheFoamCategory} we define trivalent graphs and foams, which form the foam category. In \cite{BLANCHET2010} Blanchet defines an integral evaluation of decorated closed foams, which he uses to construct a TQFT functor from the foam category to the category of graded modules. In Section \ref{EvaluationOfFoams} we present a more general formula of evaluation of foams into the ring of symmetric polynomials in two variables with integer coefficients. This is a specialization of a more general construction defined in \cite{Robert2020} for $sln-$foams with arbitrary $n$. We describe in Section \ref{universal} a way to define tautological functors from the foam category into the category of graded modules as in \cite{BLANCHET2010}. This construction turns out to be a categorification of the evaluation of closed graphs defined in Section \ref{EvaluationOfGraphs}. \par
We then proceed in Section \ref{TheDef} to define the Khovanov comlex of tangles using foams. We adapt a similar approach to the one used in \cite{BarNatan2005}, starting with a pictorial definition and working with formal complexes in the foam category, and then proving invariance up to homotopy with explicit maps. The following Section \ref{TrivalentTQFT} relates the evaluation of foams used in this paper to the TQFT functor defined by Blanchet. We then recover the Jones polynomial in Section \ref{Categorification} as the graded Euler characteristic of Khovanov homology. \par
Up until this point, there is no clear reason for the Khovanov complex defined by Blanchet to coincide with the original Khovanov complex. We present a proof that they are indeed isomorphic based on \cite{CupFoams} in Section \ref{sec:RecoveringKh}. \par
A final ingredient to the proof of functoriality would be Lee Khovanov homology, discussed in Section \ref{sec: Lee}. In this section we use the Karoubi envelope approach in \cite{BarNatan2006} to prove Lee degeneration in the foam setting. \par
Section \ref{Funct} is devoted for the proof of functoriality of Khovanov homology in the embedded case, starting with functoriality up to sign as in \cite{BarNatan2005}, and the fixing the sign using Lee/ Equivariant Khovanov homology. Finally, in Section \ref{Imm} we extend the construction to immersed cobordisms as in \cite{ImmersedCobordisms}, and we finish by proving strict functoriality in the category of immersed cobordisms. 
\section{The foam category}
\label{TheFoamCategory}
\input{\textfolder TheFoamCategory.tex}
\section{Evaluation of foams}
\label{EvaluationOfFoams}
\input{\textfolder EvaluationOfFoams.tex}
\section{The universal construction}\label{universal}
\input{\textfolder Universal.tex}
\section{Evaluation of graphs}
\label{EvaluationOfGraphs}\label{EvaluationOfGraphs}
\input{\textfolder EvaluationOfGraphs}

\section{The Khovanov complex using foams}\label{TheDef}
\subsection{Formal chain complexes in the category of foams}
\input{\textfolder FormalComplexes.tex}
\subsection{The definition of the Khovanov complex: A picture}\label{DefPic}
\input{\textfolder APicture.tex}
\input{\textfolder ExampleTrefoil.tex}

\subsection{Invariance} \label{sec:Invariance}

Let $D$ be a planar diagram of a tangle with boundary $B$. Then we can define $CKh(D)\in Kob(B)$ in the same way we did for links. 
\begin{thh}\label{invariance}
Let $D$ be a diagram of a tangle $T$. Then:
    \begin{enumerate}

    \item  All differentials in $ CKh(D)$ are of degree $0$.
    \item The complex $ CKh(T)$ is an invariant of the tangle $T$ up to degree $0$ homotopy equivalences. That is, if $D_1$ and $D_2$ are two diagrams which differ by some Reidemeister moves and planar isotopy, then there is a homotopy equivalence $F:  CKh\left(D_1\right) \rightarrow CKh\left(D_2\right)$ with $deg(F)=0$. 
    \end{enumerate}
\end{thh}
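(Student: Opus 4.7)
\medskip

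\noindent\textbf{Plan.} The two statements are logically independent but both reduce to local computations: (1) is a degree bookkeeping argument, while (2) is a Reidemeister-by-Reidemeister construction of explicit homotopy equivalences, in the style of Bar-Natan \cite{BarNatan2005} transported to the foam category via the evaluation/grading conventions fixed in Sections \ref{EvaluationOfFoams}--\ref{universal}.

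\medskip

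\noindent\textbf{Step 1: Degree of the differentials.} Every edge in the cube of resolutions of $D$ corresponds to changing the resolution at a single crossing from the $0$-smoothing to the $1$-smoothing, and the associated morphism is the elementary saddle foam (the zip or unzip) that locally interpolates between the two trivalent graphs. I would first record the $q$-degree of such a local saddle foam from the grading formula for foams given in Section \ref{TheFoamCategory}; a direct Euler-characteristic count of the underlying foam gives degree $+1$. The global differential in $CKh(D)$ also involves the homological shift $[-n_-]\{n_+-2n_-\}$ and the cube-edge grading shift by the height in the cube. Putting these together, each edge of the differential has total $q$-degree $+1-1=0$, and no homological degree change other than $+1$, so $d$ is a morphism of degree $0$ in $Kob(B)$. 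This is essentially verification; the content is that the shifts in the definition were chosen for exactly this purpose.

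\medskip

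\noindent\textbf{Step 2: Invariance under Reidemeister moves.} I proceed move by move, mirroring \cite{BarNatan2005} but using foams (and the local foam relations proved in Section \ref{TheFoamCategory}) instead of Bar-Natan dotted cobordisms. For each move, the plan is:
(a) write down the two complexes $CKh(D_1)$ and $CKh(D_2)$ as two-term or three-term complexes in the foam category whose objects are the relevant local resolutions;
(b) identify inside the larger complex a contractible subcomplex or a Gaussian-elimination pair (an isomorphism between two matching terms), obtained from the foam relations -- in particular the sphere/bubble evaluations, neck-cutting, and the dot-migration identities collected in Section \ref{TheFoamCategory};
(c) after cancelling this subcomplex one is left with a complex that is, on the nose, $CKh$ of the simpler diagram. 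The resulting isomorphism is then packaged into an explicit chain map $F$ and an explicit inverse up to homotopy $G$, whose degrees are checked to be $0$ using Step 1 plus a local computation for each foam used. Concretely: for R1 the kink produces a two-term complex one of whose terms contains a cap/cup that, after neck-cutting and the sphere evaluations, splits off an acyclic summand; for R2, the middle column of the $2\times 2$ cube contains two isomorphic objects connected by an identity foam, which I eliminate by the Gaussian-elimination lemma for complexes in an additive category; for R3, I apply the R2 argument twice on both sides and compare the resulting simplified complexes, or equivalently build the R3 chain map as the diagonal induced by R2 on each side.

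\medskip

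\noindent\textbf{Main obstacle.} Step 1 and the R1/R2 cases are mechanical once the foam relations are in hand. The real work is R3: one must choose compatible simplifications on both sides so that the induced chain maps genuinely agree, which requires a careful accounting of signs coming from the cube edges and of the orientations of the trivalent vertices introduced by the saddle foams. I expect this to be the only place where a non-trivial foam identity (beyond sphere, bubble, and neck-cutting) is needed, most likely a square/theta relation from Section \ref{TheFoamCategory}. Invariance under planar isotopy of diagrams is then automatic because the foam category is defined up to ambient isotopy of the underlying two-complex relative to the boundary.
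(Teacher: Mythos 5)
Your overall strategy matches the paper's: part (1) is the same bookkeeping argument (the elementary saddle foam has a fixed degree and the homological/quantum shift built into the definition of $CKh$ exactly compensates it), and part (2) is the same Bar-Natan style move-by-move construction of explicit homotopy equivalences. Two points deserve flagging, one minor and one substantive.

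The minor point is a sign convention: you compute the saddle foam to have $q$-degree $+1$ and conclude $+1-1=0$, whereas the paper's convention assigns the saddle degree $-1$ (as in Khovanov's original merge/split maps) and then the shift raises the degree by $1$ to get $0$. The end result agrees, but you should check that the degree you assign to a foam is consistent with the grading formula actually used in Section \ref{TheFoamCategory}; an Euler-characteristic count can come out with either sign depending on which normalization is chosen.

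The substantive point is that you treat the Reidemeister moves as if there were only three of them (R1, R2, R3). In Bar-Natan's original dotted-cobordism setting that is harmless because the cobordism category is unoriented. In Blanchet's setting the objects are \emph{oriented} trivalent graphs and the morphisms are \emph{oriented} foams, and the local picture of, say, an R2 move genuinely depends on the relative orientations of the two strands: the paper gives separate arguments for $R2^+$ and $R2^-$, and two separate versions of R3. The paper handles the remaining orientation variants by invoking the result of \cite{GeneratingSets} that a specific list of five oriented moves generates all oriented Reidemeister moves. Without either checking all oriented variants or citing such a generating set, your case analysis is incomplete, and this is precisely the point at which the naive transport of Bar-Natan's proof to the foam category needs extra care. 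Once that is supplied, your Gaussian-elimination / contractible-subcomplex strategy for each move is in line with the paper, and your identification of R3 as the delicate step (to be reduced to two applications of R2 on each side, with attention to cube signs) is also the route the paper takes.
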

\begin{proof}
    The cobordisms $d_s^j$ defined above are of degree $-1$, so the differentials $d_i$ in the complex $\llbracket D \rrbracket$ are all of degree $-1$. 
    But after the shift of degree  $deg( CKh^i(D))=q^{-i+n_--n_+}deg(\llbracket D \rrbracket_i)$, the degrees of the differentials $d_i$ in $CKh$ are shifted up by the difference $(i+1+n_--n_+)-(i+n_--n_+)=1$. That is $deg(d)=0$. \par 
    Now we define the homotopy equivalence maps for each Reidemeister move. 
    %For the second statement, we need to prove invariance up to homotopy of $ CKh$ by the three Reidemeister moves, then check that they are of
    %To do so, we define homotopy equivalence maps between the brackets of diagrams deferring by a Reidemeiser move.
    %This is equivalent t
    %, then check the degree of the induced maps between the Khovanov complexes $CKh$.
    \begin{itemize}
    \item \input{\textfolder ROnePlus.tex}
    \item \input{\textfolder ROneMinus.tex}
    From now on we only fill in the blue surfaces if it makes reading the cobordisms easier.
    \item \input{\textfolder RTwoPlus.tex}
    \item \input{\textfolder RTwoMinus.tex}   
    \item \input{\textfolder TheGreatOldRThree.tex}
    \item \input{\textfolder AnotherRThree.tex}

\end{itemize}
    These 5 moves generate all oriented Reidemeister moves \cite{GeneratingSets}. This proves invariance up to homotopy of the bracket complex, and therefore the Khovanov complex. What remains to check is that the homotopy equivalence in question is of degree $0$. As we already have explicit formulas for the maps associated with each Reidemeister move, it is just a matter of verifying that these maps are of degree $0$.\par
    A few last remarks about invariance:
\begin{itemize}
    \item There are a few more arbitrary choices in the definition of $CKh$. In particular, we need to check that different choices of the ordering of crossings, or of the ordering of states at the same height, give homotopic chain complexes.
    \item We proved invariance under Reidemeister moves by working locally. To see why this is enough, one can use the language of canopolies (see \cite{BarNatan2005}).

\end{itemize}
\end{proof}
\subsection{The trivalent TQFT}
\label{TrivalentTQFT}
\input{\textfolder TrivalentTQFT.tex}
\subsection{Categorification of the Jones polynomial}\label{Categorification}
\input{\textfolder JonesPolynomial.tex}
\section{Recovering the original Khovanov homology}
\label{sec:RecoveringKh}
\input{\textfolder RecoveringKh.tex}
\section{Lee Khovanov homology}\label{sec: Lee}
\input{\textfolder LeeHomology.tex}

\section{Functoriality}\label{Funct}
\input{\textfolder Functoriality.tex}
\section{Immersed cobordisms}\label{Imm}
\input{\textfolder ImmersedCobordisms.tex}
\nocite{*}
\printbibliography

@article{BLANCHET2010,
  title = {An oriented model for Khovanov homology},
  volume = {19},
  ISSN = {1793-6527},
  url = {http://dx.doi.org/10.1142/S0218216510007863},
  DOI = {10.1142/s0218216510007863},
  number = {02},
  journal = {Journal of Knot Theory and Its Ramifications},
  publisher = {World Scientific Pub Co Pte Lt},
  author = {Blanchet,  Christian},
  year = {2010},
  month = feb,
  pages = {291–312}
}

@article{Robert2020,
  title = {A closed formula for the evaluation of foams},
  volume = {11},
  ISSN = {1664-073X},
  url = {http://dx.doi.org/10.4171/QT/139},
  DOI = {10.4171/qt/139},
  number = {3},
  journal = {Quantum Topology},
  publisher = {European Mathematical Society - EMS - Publishing House GmbH},
  author = {Robert,  Louis-Hadrien and Wagner,  Emmanuel},
  year = {2020},
  month = aug,
  pages = {411–487}
}

@article{Rasmussen2010,
  title = {Khovanov homology and the slice genus},
  volume = {182},
  ISSN = {1432-1297},
  url = {http://dx.doi.org/10.1007/s00222-010-0275-6},
  DOI = {10.1007/s00222-010-0275-6},
  number = {2},
  journal = {Inventiones mathematicae},
  publisher = {Springer Science and Business Media LLC},
  author = {Rasmussen,  Jacob},
  year = {2010},
  month = sep,
  pages = {419–447}
}

@article{FastComputations,
  title = {Fast Khovanov homology computations},
  volume = {16},
  ISSN = {1793-6527},
  url = {http://dx.doi.org/10.1142/S0218216507005294},
  DOI = {10.1142/s0218216507005294},
  number = {03},
  journal = {Journal of Knot Theory and Its Ramifications},
  publisher = {World Scientific Pub Co Pte Lt},
  author = {Bar-Natan,  Dror},
  year = {2007},
  month = mar,
  pages = {243–255}
}

@article{BarNatan2006,
  title = {The Karoubi envelope and Lee’s degeneration of Khovanov homology},
  volume = {6},
  ISSN = {1472-2747},
  url = {http://dx.doi.org/10.2140/agt.2006.6.1459},
  DOI = {10.2140/agt.2006.6.1459},
  number = {3},
  journal = {Algebraic \& Geometric Topology},
  publisher = {Mathematical Sciences Publishers},
  author = {Bar-Natan,  Dror and Morrison,  Scott},
  year = {2006},
  month = oct,
  pages = {1459–1469}
}

@book{Kock2003,
  title = {Frobenius Algebras and 2D Topological Quantum Field Theories},
  ISBN = {9780511615443},
  url = {http://dx.doi.org/10.1017/CBO9780511615443},
  DOI = {10.1017/cbo9780511615443},
  publisher = {Cambridge University Press},
  author = {Kock,  Joachim},
  year = {2003}
}

@misc{Rasmussen2021,
  title = {Knots,  polynomials,  and categorification},
  ISSN = {2472-5064},
  url = {http://dx.doi.org/10.1090/pcms/028/02},
  DOI = {10.1090/pcms/028/02},
  journal = {Quantum Field Theory and Manifold Invariants},
  publisher = {American Mathematical Society},
  author = {Rasmussen,  Jacob},
  year = {2021},
  month = dec,
  pages = {77–170}
}

@article{BarNatan2002,
  title = {On Khovanov’s categorification of the Jones polynomial},
  volume = {2},
  ISSN = {1472-2747},
  url = {http://dx.doi.org/10.2140/agt.2002.2.337},
  DOI = {10.2140/agt.2002.2.337},
  number = {1},
  journal = {Algebraic \& Geometric Topology},
  publisher = {Mathematical Sciences Publishers},
  author = {Bar-Natan,  Dror},
  year = {2002},
  month = may,
  pages = {337–370}
}

@article{GeneratingSets,
  doi = {10.48550/ARXIV.0908.3127},
  url = {https://arxiv.org/abs/0908.3127},
  author = {Polyak,  Michael},
  keywords = {Geometric Topology (math.GT),  FOS: Mathematics,  FOS: Mathematics,  57M25,  57M27},
  title = {Minimal generating sets of Reidemeister moves},
  publisher = {arXiv},
  year = {2009},
  copyright = {arXiv.org perpetual,  non-exclusive license}
}

@misc{Lee02,
  doi = {10.48550/ARXIV.MATH/0210213},
  url = {https://arxiv.org/abs/math/0210213},
  author = {Lee,  Eun Soo},
  keywords = {Geometric Topology (math.GT),  Quantum Algebra (math.QA),  FOS: Mathematics,  FOS: Mathematics,  57M27},
  title = {An endomorphism of the Khovanov invariant},
  publisher = {arXiv},
  year = {2002},
  copyright = {Assumed arXiv.org perpetual,  non-exclusive license to distribute this article for submissions made before January 2004}
}

@article{BarNatan2005,
  title = {Khovanov’s homology for tangles and cobordisms},
  volume = {9},
  ISSN = {1465-3060},
  url = {http://dx.doi.org/10.2140/gt.2005.9.1443},
  DOI = {10.2140/gt.2005.9.1443},
  number = {3},
  journal = {Geometry \& Topology},
  publisher = {Mathematical Sciences Publishers},
  author = {Bar-Natan,  Dror},
  year = {2005},
  month = aug,
  pages = {1443–1499}
}

@book{Carter1997,
  title = {Knotted Surfaces and Their Diagrams},
  ISBN = {9781470412838},
  ISSN = {2331-7159},
  url = {http://dx.doi.org/10.1090/SURV/055},
  DOI = {10.1090/surv/055},
  journal = {Mathematical Surveys and Monographs},
  publisher = {American Mathematical Society},
  author = {Carter,  J. and Saito,  Masahico},
  year = {1997},
  month = dec 
}

@article{CupFoams,
  doi = {10.48550/ARXIV.1903.12194},
  url = {https://arxiv.org/abs/1903.12194},
  author = {Beliakova,  Anna and Hogancamp,  Matthew and Putyra,  Krzysztof Karol and Wehrli,  Stephan Martin},
  keywords = {Algebraic Topology (math.AT),  Quantum Algebra (math.QA),  Representation Theory (math.RT),  FOS: Mathematics,  FOS: Mathematics,  57M27,  55N35},
  title = {On the functoriality of sl(2) tangle homology},
  publisher = {arXiv},
  year = {2019},
  copyright = {arXiv.org perpetual,  non-exclusive license}
}

@misc{ImmersedCobordisms,
  doi = {10.48550/ARXIV.2510.14760},
  url = {https://arxiv.org/abs/2510.14760},
  author = {Carter,  Scott and Cooper,  Benjamin and Khovanov,  Mikhail and Krushkal,  Vyacheslav},
  keywords = {Geometric Topology (math.GT),  Quantum Algebra (math.QA),  FOS: Mathematics,  FOS: Mathematics},
  title = {An Extension of Khovanov Homology to Immersed Surface Cobordisms},
  publisher = {arXiv},
  year = {2025},
  copyright = {Creative Commons Attribution 4.0 International}
}

@article{Khovanov2000,
  title = {A categorification of the Jones polynomial},
  volume = {101},
  ISSN = {0012-7094},
  url = {http://dx.doi.org/10.1215/S0012-7094-00-10131-7},
  DOI = {10.1215/s0012-7094-00-10131-7},
  number = {3},
  journal = {Duke Mathematical Journal},
  publisher = {Duke University Press},
  author = {Khovanov,  Mikhail},
  year = {2000},
  month = feb 
}

@misc{https://doi.org/10.48550/arxiv.2510.05273,
  doi = {10.48550/ARXIV.2510.05273},
  url = {https://arxiv.org/abs/2510.05273},
  author = {Ren,  Qiuyu and Sullivan,  Ian and Wedrich,  Paul and Willis,  Michael and Zhang,  Melissa},
  keywords = {Geometric Topology (math.GT),  Quantum Algebra (math.QA),  FOS: Mathematics,  FOS: Mathematics,  57K41 (Primary) 57K18,  57K16,  57R56 (Secondary)},
  title = {Khovanov skein lasagna modules with $1$-dimensional inputs},
  publisher = {arXiv},
  year = {2025},
  copyright = {arXiv.org perpetual,  non-exclusive license}
}

@article{Fun,
  doi = {10.48550/ARXIV.MATH/0206303},
  url = {https://arxiv.org/abs/math/0206303},
  author = {Jacobsson,  Magnus},
  keywords = {Geometric Topology (math.GT),  Quantum Algebra (math.QA),  FOS: Mathematics,  FOS: Mathematics,  57Q45,  57M25},
  title = {An invariant of link cobordisms from Khovanov homology},
  publisher = {arXiv},
  year = {2002},
  copyright = {Assumed arXiv.org perpetual,  non-exclusive license to distribute this article for submissions made before January 2004}
}

@article{FixSign,
  doi = {10.48550/ARXIV.MATH/0701339},
  url = {https://arxiv.org/abs/math/0701339},
  author = {Clark,  David and Morrison,  Scott and Walker,  Kevin},
  keywords = {Geometric Topology (math.GT),  FOS: Mathematics,  FOS: Mathematics,  57M25; 57M27; 57Q45},
  title = {Fixing the functoriality of Khovanov homology},
  publisher = {arXiv},
  year = {2007},
  copyright = {Assumed arXiv.org perpetual,  non-exclusive license to distribute this article for submissions made before January 2004}
}

@article{Caprau2008,
  title = {sl(2) tangle homology with a parameter and singular cobordisms},
  volume = {8},
  ISSN = {1472-2747},
  url = {http://dx.doi.org/10.2140/agt.2008.8.729},
  DOI = {10.2140/agt.2008.8.729},
  number = {2},
  journal = {Algebraic \&; Geometric Topology},
  publisher = {Mathematical Sciences Publishers},
  author = {Caprau,  Carmen Livia},
  year = {2008},
  month = may,
  pages = {729–756}
}

@misc{Lasagna2024,
  doi = {10.48550/ARXIV.2402.10452},
  url = {https://arxiv.org/abs/2402.10452},
  author = {Ren,  Qiuyu and Willis,  Michael},
  keywords = {Geometric Topology (math.GT),  Quantum Algebra (math.QA),  FOS: Mathematics,  FOS: Mathematics,  57K41 (Primary) 57K18,  57R55,  57R56 (Secondary)},
  title = {Khovanov homology and exotic $4$-manifolds},
  publisher = {arXiv},
  year = {2024},
  copyright = {arXiv.org perpetual,  non-exclusive license}
}

@article{Morrison2022,
  title = {Invariants of 4–manifolds from Khovanov–Rozansky
link homology},
  volume = {26},
  ISSN = {1465-3060},
  url = {http://dx.doi.org/10.2140/gt.2022.26.3367},
  DOI = {10.2140/gt.2022.26.3367},
  number = {8},
  journal = {Geometry \&; Topology},
  publisher = {Mathematical Sciences Publishers},
  author = {Morrison,  Scott and Walker,  Kevin and Wedrich,  Paul},
  year = {2022},
  month = dec,
  pages = {3367–3420}
}

@article{gln,
  doi = {10.48550/ARXIV.1703.06691},
  url = {https://arxiv.org/abs/1703.06691},
  author = {Ehrig,  Michael and Tubbenhauer,  Daniel and Wedrich,  Paul},
  keywords = {Geometric Topology (math.GT),  Quantum Algebra (math.QA),  FOS: Mathematics,  FOS: Mathematics},
  title = {Functoriality of colored link homologies},
  publisher = {arXiv},
  year = {2017},
  copyright = {arXiv.org perpetual,  non-exclusive license}
}

@misc{S3,
  doi = {10.48550/ARXIV.2510.09399},
  url = {https://arxiv.org/abs/2510.09399},
  author = {Imori,  Hayato and Sano,  Taketo and Sato,  Kouki and Taniguchi,  Masaki},
  keywords = {Geometric Topology (math.GT),  FOS: Mathematics,  FOS: Mathematics,  57K18,  57R58},
  title = {Cobordism maps in Khovanov homology and singular instanton homology II},
  publisher = {arXiv},
  year = {2025},
  copyright = {arXiv.org perpetual,  non-exclusive license}
}
\end{document}